\documentclass[12pt]{amsart}

\usepackage{amssymb}
\usepackage{bm}
\usepackage{graphicx}
\usepackage{psfrag}
\usepackage[dvipsnames]{xcolor}
\usepackage{enumerate}
\usepackage{multirow}
\usepackage{url}
\usepackage{mathpazo}

\usepackage{subcaption}

\usepackage{comment}

\usepackage{algpseudocode}

\usepackage{mathtools}

\usepackage{xy}
\input xy
\xyoption{all}

\usepackage{tikz}

\newcommand{\excise}[1]{}

\newtheorem{thm}{Theorem}[section]
\newtheorem{lemma}[thm]{Lemma}

\newtheorem{conj}[thm]{Conjecture}

\theoremstyle{definition}

\newtheorem{example}[thm]{Example}
\newtheorem{remark}[thm]{Remark}

\numberwithin{equation}{section}

\renewcommand\>{\rangle}
\newcommand\<{\langle}

\newcommand\ZZ{\mathbb{Z}}

\DeclareMathOperator\Ap{Ap} 

\makeatletter
\def\@bignumber#1#2{%
  \ifx#2\end
    #1\let\next\@gobble
  \else
    #1\hspace{0pt plus 1pt}\let\next\@bignumber
  \fi
  \next#2}
\newcommand{\bignumber}[1]{\@bignumber#1\end}
\makeatother

\begin{document}

\mbox{}
\title[Lengths of irreducible decompositions of numerical semigroups]{Lengths of irreducible decompositions \\ of numerical semigroups}

\author[Garc\'ia-S\'anchez]{Pedro A. Garc\'ia-S\'anchez}
\thanks{The first author acknowledges financial support from grant
  PID2022-138906NB-C21, funded by
  MICIU/AEI/\bignumber{10.13039}/\bignumber{501100011033} and by ERDF
  ``A way of making Europe''; and from the Spanish Ministry of Science
  and Innovation (MICINN), through the ``Severo Ochoa and Mar\'{\i}a de
  Maeztu Programme for Centres and Unities of Excellence''
  (CEX2020-001105-M).}
\address{Departamento de Álgebra and IMAG, Universidad de Granada, Granada, Spain}
\email{pedro@ugr.es}

\author[O'Neill]{Christopher O'Neill}
\thanks{Much of the work in this manuscript was completed on the second author's visit to Universidad de Granada in 2025, funded by MICIU/AEI/\bignumber{10.13039}/\bignumber{501100011033}, grant PID2022-138906NB-C21.}
\address{Mathematics Department\\San Diego State University\\San Diego, CA 92182}
\email{cdoneill@sdsu.edu}

\date{\today}

\begin{abstract}
A numerical semigroup is an additive subsemigroup of the natural numbers that contains zero and has finite complement. A numerical semigroup is irreducible if it cannot be written as an intersection of numerical semigroups properly containing it.  It is known that every numerical semigroup can be decomposed as an intersection of irreducible numerical semigroups, but there can be multiple such decompositions, even when irredundancy is required. In this paper, we study the set of all decomposition lengths of a given numerical semigroup. It is conjectured that the set of decomposition lengths is always an interval; we prove this conjecture for numerical semigroups whose smallest positive element is at most six. Additionally, we examine a class of numerical semigroups that was recently shown to achieve arbitrarily large minimum decomposition length, and construct a family of irreducible decompositions whose lengths form a large interval.
\end{abstract}

\keywords{Numerical semigroup, irreducible numerical semigroup, decomposition into irreducibles, Apéry set, special gap, ordinary numerical semigroup}
\subjclass[2020]{20M14, 20M10, 20M13}

\maketitle


\section{Introduction}
\label{sec:intro}

A \emph{numerical semigroup} is a set of non-negative integers closed under addition and with finite complement in the set of non-negative integers. The set of integers not belonging to a numerical semigroup $S$ is called the set of \emph{gaps} of $S$; the cardinality of the sets of gaps is called the \emph{genus} of $S$ and is denoted by $\operatorname{g}(S)$. The largest integer not belonging to $S$ is known as its \emph{Frobenius number}, and it is denoted by $\operatorname{F}(S)$. The smallest positive integer in $S$ is called the \emph{multiplicity} of $S$, and denoted $\operatorname{m}(S)$.

A numerical semigroup is \emph{irreducible} if it cannot be expressed as the intersection of two numerical semigroups properly containing it. There are many characterizations of irreducible numerical semigroups; see \cite[Chapter~3]{numerical} for an overview. A~numerical semigroup $S$ with Frobenius number $F$ is irreducible if and only if it is maximal (under inclusion) among numerical semigroups not containing $F$.
If $F$ is odd, then $S$ is irreducible if and only if $\operatorname{g}(S) = \tfrac{1}{2}(F + 1)$, and in this case $S$ is called \emph{symmetric} (for every integer $x$, we have $x \notin S$, if and only if $F-x\in S$). If $F$ is even, then $S$ is irreducible if and only if $\operatorname{g}(S) = \tfrac{1}{2}F + 1$, and in this case $S$ is called \emph{pseudo-symmetric} (for every integer $x \ne \tfrac{1}{2}F$, we have $x\notin S$ if and only if $F-x\in S$).

Every numerical semigroup can be expressed as an intersection of finitely many irreducible numerical semigroups. If $S = S_1 \cap \cdots \cap S_k$ with each $S_i$ irreducible, then we call this expression a \emph{decomposition} of $S$ into irreducibles if $S \neq \cap_{i\in I} S_i$ for every proper subset $I \subsetneq \{1, \ldots, k\}$ (that is, any decomposition of $S$ into irreducibles is required to be \emph{irredundant}).  The number $k$ is called the \emph{length} of the decomposition.  

In this paper, we study the set of all decomposition lengths of a given numerical semigroup.  
We briefly compare to another form of decomposition:\ factorizations of semigroup elements as sums of minimal generators.  There is extensive literature studying lengths of such factorizations, both for numerical semigroups and more broadly, where a rich theory of combinatorial factorization invariants has been developed~\cite{gz}.  

In contrast, much less is known about decomposition lengths of numerical semigroups.  
Upper and lower bounds on the maximum and minumum decomposition lengths have been given in \cite{r-b}, but not until recently was it shown that the minimum decomposition length of a numerical semigroup can be arbitrarily large~\cite{b-f}, or that for every integer $k \ge 2$, there exists a numerical semigroup $S$ with decompositions of length 2 and $k$~\cite{gs}.  

Another way to view decomposition is as a form of ``factorization'' of numerical semigroups under the intersection operation.  Intersection is commutative and associative, but since it is far from being cancellative, much of the classical factorization theory does not immediately extend.  
The decompositions we consider here are aligned to the idea of minimal factorizations in this setting, as introduced in \cite{c-t}, though they already appear in \cite{a-t}.  In particular, since numerical semigroups are idempotent with respect to intersection, we necessarily impose non-redundancy as a requirement for the decompositions we consider.  


The following conjecture, which appeared in~\cite{gs}, provides a primary motivation for the present work.  

\begin{conj}
The set of decomposition lengths of any numerical semigroup is an interval.
\end{conj}

In Section~\ref{sec:low-multiplicity}, we prove this conjecture holds for numerical semigroups with multiplicity at most six.  To this end, we develop some tools involving Apéry sets and special gaps of numerical semigroups, which are presented in Section~\ref{sec:apery-and-special-gaps}.  We also examine in Section~\ref{sec:ordinary} a family of numerical semigroups shown in~\cite{b-f} to achieve arbitrarily large minimum decomposition length; for each semigroup in this family, we locate a large interval of decomposition lengths.  

The experiments that led to the results in this paper were carried out using the \texttt{GAP} \cite{gap} package \texttt{NumericalSgps} \cite{numericalsgps}.


\section{Apéry sets and special gaps}
\label{sec:apery-and-special-gaps}

In this section, we analyze the relevance of Apéry sets and special gaps in the study of decompositions of numerical semigroups into irreducibles. We will show how these concepts are interconnected and how they can be used to derive bounds on the lengths of decompositions.

For a fixed numerical semigroup $S$, we can define the divisibility relation on $\mathbb{Z}$: 
\[
a \preceq_S b \quad \text{whenever} \quad b - a \in S.
\]
This is a partial order on $\mathbb{Z}$. When the semigroup $S$ is clear from context, we sometimes simply write $\preceq$.  

Let $S$ be a numerical semigroup and let $n$ be a nonzero element of $S$. The \emph{Apéry set} of $S$ with respect to $n$ is defined as
\[
\Ap(S;n)=\{ s\in S : s - n \notin S\}.
\]
It is well known that $\Ap(S;n)$ has cardinality $n$ and that $\{0,a_1,\dots,a_{n-1}\}$, where each $a_i$ is the smallest element of $S$ congruent with $i$ modulo $n$. In particular, 
\begin{equation}\label{eq:membership-with-apery}
s\in S \text{ if and only if } a_i \le s.
\end{equation}
for each $s\in \mathbb{Z}$ with $s \equiv i \bmod n$.  This membership criterion is central to many results, in this paper and elsewhere.  When $n$ is the multiplicity of $S$ we will sometimes write $\Ap(S)$ instead of $\Ap(S;n)$ for the sake of simplicity.

It is well known that irreducible numerical semigroups can be characterized in terms of their Apéry sets.

\begin{lemma}[{\cite[Propositions~4.10 and~4.15]{numerical}}]\label{lem:pseudosymmposet}
Let $S$ be a numerical semigroup, and fix a nonzero $m \in S$.
Suppose $\Ap(S;m) = \{0, a_1, \ldots, a_{m-1}\}$ with $a_i \equiv i \bmod m$ for each $i$, and let $a_k = \max \Ap(S;m)$.  
\begin{enumerate}[(a)]
\item 
$S$ is symmetric if and only if $a_i \preceq a_k$ for each $i$.  

\item 
$S$ is pseudosymmetric if and only if there exists $j$ such that $2a_j = a_k + m$ and $a_i \preceq a_k$ for all $i \ne j$.  
\end{enumerate}
\end{lemma}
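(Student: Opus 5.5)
The plan is to translate symmetry and pseudo-symmetry into statements about the Apéry set, using the membership criterion \eqref{eq:membership-with-apery} and the identity $\operatorname{F}(S) = a_k - m$. That identity holds because $x \notin S$ exactly when $x < a_i$ for the residue $i$ of $x$. So the largest gap in class $i$ is $a_i - m$, and the largest gap overall is $a_k - m$.

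The key bookkeeping fact is that $S$ has exactly $(a_i - i)/m$ gaps congruent to $i$ (taking $0 \le i < m$). Summing, $\operatorname{g}(S) = \frac{1}{m}\sum_i a_i - \frac{m-1}{2}$.

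For part (a), I will work directly with the symmetry property. Write $F = a_k - m$.

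\emph{Forward direction.} Suppose $S$ is symmetric and fix $i \ne k$. The element $a_k - a_i$ is not in $S$ exactly when $F - (a_k - a_i) = a_i - m \in S$. But $a_i - m \notin S$ by the definition of the Apéry set, so $a_k - a_i$ must lie in $S$, which means $a_i \preceq a_k$.

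\emph{Converse.} Suppose $a_i \preceq a_k$ for all $i$. Since $a_k - a_i \in S$ while $a_i - m \notin S$, we get $a_k - a_i \notin \Ap(S;m)$ unless $a_k - a_i$ is itself the Apéry element of its class. I will argue via the decomposition $a_k = a_i + (a_k - a_i)$: the element $a_k - a_i$ is in $S$ and congruent to $k - i$, so $a_{k-i} \le a_k - a_i$. This gives $a_i + a_{k-i} \le a_k$ for all $i$. Each class contributes to the genus, and summing the inequalities over pairs $(i, k-i)$ gives $2\sum_j a_j \le m a_k$, up to the $0$ term.

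The intended route is the gap-pairing map $x \mapsto F - x$. I need to show that if $x \notin S$ then $F - x \in S$. With $x \equiv i$ and $x \le a_i - m$, we get $F - x \ge a_k - a_i$. Since $a_k - a_i \in S$ and $F - x$ is in the same class as $a_k - a_i$ and at least as large, \eqref{eq:membership-with-apery} gives $F - x \in S$. The other implication, that $x \in S$ forces $F - x \notin S$, holds for any $S$. This makes the converse of (a) clean, and I will use this direct argument rather than counting.

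For part (b), I will use the same pairing with the exceptional element $F/2$.

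\emph{Forward direction.} Suppose $S$ is pseudo-symmetric. Let $j$ be the class of $F/2 + m$. Since $F/2$ is a gap but $F/2 + m$ is not a gap (otherwise $F - (F/2 + m) = F/2 - m \in S$, forcing $F/2 \in S$), we have $a_j = F/2 + m$, hence $2a_j = a_k + m$. For $i \ne j$, the element $a_i - m$ is a gap different from $F/2$, so pseudo-symmetry gives $a_k - a_i \in S$.

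\emph{Converse.} Take a gap $x \ne F/2$ in class $i$. If $i \ne j$, the argument from (a) applies verbatim. If $i = j$, then $x \le a_j - m = F/2$ and $x \ne F/2$, so $x \le F/2 - m$. Then $F - x \ge F/2 + m = a_j$ and $F - x \equiv j$, so $F - x \in S$.

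The main obstacle is the bookkeeping in the converse of (b): confirming that the class of $F/2$ is exactly the class $j$ singled out by the hypothesis, that is, $a_j - m = F/2$. This is forced by $2a_j = a_k + m$, which rearranges to $a_j - m = (a_k - m)/2 = F/2$. The remaining routine check is that $F$ is even, which follows from $F = 2(a_j - m)$.
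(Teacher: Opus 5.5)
Your proof is correct. The paper gives no argument of its own here and simply defers to \cite[Propositions~4.10 and~4.15]{numerical}. Your route is the standard self-contained argument for those results, phrased directly in terms of $\preceq$ rather than (as in the book) via the pairing $a_i+a_{m-1-i}=a_{m-1}$ on the increasingly ordered Apéry set. It combines $\operatorname{F}(S)=a_k-m$, the membership criterion, and the gap-pairing characterization $x\notin S \Leftrightarrow \operatorname{F}(S)-x\in S$ (required only for $x\ne \operatorname{F}(S)/2$ in the pseudo-symmetric case). The genus-counting detour at the start of your converse to (a) is never used and should be deleted. That includes the garbled sentence about $a_k-a_i\notin\Ap(S;m)$, which in fact has the wrong conclusion: $a_k-a_i$ always lies in $\Ap(S;m)$ there, since $a_k-a_i-m\in S$ would force $a_k-m\in S$.
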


The following result is a direct consequence of \cite[Lemma~3.3]{r}, and characterizes when a given set is the Apéry set of a numerical semigroup.

\begin{lemma}\label{lem:is-Apery}
Let $m > 1$ be a integer, and let $A=\{0,a_1, \ldots, a_{m-1}\}$ with $a_i\equiv i \pmod m$. Then $S=\langle A\cup \{m\}\rangle$ is a numerical semigroup containing $m$.  Moreover, $\Ap(S;m) = A$ if and only if for every $i,j,k \in \{1, \ldots, m-1\}$ with $i + j \equiv k \bmod m$, the inequality $a_i+a_j \ge  a_k$ holds.
\end{lemma}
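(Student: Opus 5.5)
First note that $S=\langle A\cup\{m\}\rangle$ is a numerical semigroup containing $m$. It is clearly an additive submonoid of $\mathbb{N}$ containing $m$, assuming the $a_i$ are positive integers. Its complement is finite because for each residue $i$ every integer $s\equiv i \pmod m$ with $s\ge a_i$ equals $a_i+km$ for some $k\ge 0$, so it lies in $S$. Hence all integers at least $\max A$ belong to $S$.

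For the equivalence, I would first describe $\Ap(S;m)$ explicitly. Write $b_i$ for the least element of $S$ congruent to $i$ modulo $m$ (with $b_0=0$), so that $\Ap(S;m)=\{b_0,\dots,b_{m-1}\}$ by the standard description recalled above. Since $a_i\in S$, we always have $b_i\le a_i$. So $\Ap(S;m)=A$ holds exactly when $b_i=a_i$ for all $i$. This is equivalent to saying that no element of $S$ congruent to $i$ is smaller than $a_i$.

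\emph{Necessity.} Suppose $\Ap(S;m)=A$, and take $i,j,k$ with $i+j\equiv k \pmod m$. Then $a_i+a_j\in S$ and it is congruent to $k$. By the membership criterion \eqref{eq:membership-with-apery} applied to $\Ap(S;m)=A$, we get $a_i+a_j\ge a_k$.

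\emph{Sufficiency.} This is the main step. Every element $s\in S$ can be written as $s=\sum_t a_{i_t}+qm$ with $q\ge 0$. I would show by induction on the number of $a$-summands that such a sum is at least $a_r$, where $r$ is the residue of $s$ (and $a_0=0$). With zero or one summand this is immediate. With two or more summands, combine the first two: $a_{i_1}+a_{i_2}\ge a_{k}$ where $k\equiv i_1+i_2$. If $k\ne 0$ this is the hypothesis. If $k\equiv 0$, the inequality $a_{i_1}+a_{i_2}\ge 0=a_0$ is trivial. In either case we can replace the two summands by $a_k$ (or by nothing), getting a smaller-or-equal sum of the same residue with fewer summands, and induction finishes. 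Therefore $b_r\ge a_r$ for every $r$, so $b_r=a_r$ and $\Ap(S;m)=A$.

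The only subtlety is the residue-zero case in the reduction, along with making sure the hypothesis quantifies over $k\in\{1,\dots,m-1\}$ only. This is harmless because $a_0=0$ gives the needed inequality trivially. The paper presents this as a direct consequence of \cite[Lemma~3.3]{r}, so one could alternatively just cite that result. The argument above is self-contained.
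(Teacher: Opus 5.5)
Your proof is correct. The paper gives no argument of its own: it only says the lemma is a direct consequence of \cite[Lemma~3.3]{r}. So the comparison is between that citation and your self-contained proof, which is the standard argument.

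Necessity is the membership criterion applied to $a_i+a_j$. For sufficiency, you induct on the number of summands $a_{i_t}$ in an expression $s=\sum_t a_{i_t}+qm$, repeatedly merging two summands via $a_{i_1}+a_{i_2}\ge a_k$, to get $s\ge a_r$ for $r\equiv s \bmod m$.

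Your version gains transparency. It isolates the one nontrivial point, namely that the pairwise inequalities propagate to arbitrary sums. It shows that pairs with $i+j\equiv 0 \bmod m$ need no hypothesis because $a_0=0$. It also surfaces the implicit assumption $a_i\ge 0$, which is needed for $S\subseteq\mathbb{Z}_{\ge 0}$. The citation keeps the paper short, but leaves the reader to translate the external lemma into the present notation.

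One cosmetic fix: say at the outset that summands equal to $a_0=0$ are discarded. Then every merge involves indices in $\{1,\dots,m-1\}$, which is exactly the range the hypothesis covers.
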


Fix numerical semigroups $S$ and $T$, and fix $m \in S \cap T$. Write 
\[
\Ap(S;m)=\{0,a_1,\dots,a_{m-1}\}
\quad \text{and} \quad
\Ap(T;m)=\{0,b_1,\dots,b_{m-1}\}
\]
with $a_i \equiv b_i \equiv i \bmod m$ for each $i$.  By the membership criterion~\eqref{eq:membership-with-apery}, we have
\begin{equation}\label{eq:inclusion-apery}
S \subseteq T \text{ if and only if } b_i \le a_i \text{ for each } i\in\{1,\dots,m-1\},
\end{equation}
and so 
\[
\Ap(S \cap T;m)= \{0, \max(a_1,b_1), \ldots, \max(a_{m-1},b_{m-1})\}.
\]

If $S\subseteq T$, then we define 
\[
\operatorname{M}_S(T)=\big\{i\in\{1,\dots,m-1\} : b_i = a_i\big\}.
\]
It follows easily that 
\begin{equation}\label{eq:char-M_S}
S=T_1\cap \dots \cap T_k \text{ if and only if } \{1,\dots,m-1\}=\bigcup_{i=1}^k \operatorname{M}_{S}(T_i).
\end{equation}
Thus, a decomposition $S=T_1\cap \dots \cap T_k$ is minimal if and only if $\{\operatorname{M}_S(T_i)\}_{i}$ is an irredundant cover of $\{1,\dots,m-1\}$.  

The set of gaps of $S$ that are maximal under $\preceq_S$ is known as the set of \emph{pseudo-Frobenius numbers} of $S$, and it is denoted by $\operatorname{PF}(S)$. 
We write $\operatorname{t}(S) = \#\operatorname{PF}(S)$, which is known as the \emph{type} of $S$.  It is well known that, for a numerical semigroup $S$ and nonzero $m \in S$,
\begin{equation}\label{eq:pseudo-Frobenius-Apery}
\operatorname{PF}(S)=-m+\operatorname{Maximals}_{\preceq_S}(\Ap(S;m)),
\end{equation}
so in particular $\operatorname{t}(S) \le m-1$ (see for instance \cite[Proposition~2.20]{numerical}).  

The set of \emph{special gaps} of $S$ (see \cite[Chapter~4]{numerical}) is defined as
\[
\operatorname{SG}(S)=\{x\in \operatorname{PF}(S) : 2x\in S\}.
\]
Equivalently, $x \in \operatorname{SG}(S)$ if and only if $S \cup \{x\}$ is a numerical semigroup.  
Note that since $\operatorname{SG}(S) \subseteq \operatorname{PF}(S)$, we have $\#\operatorname{SG}(S)\le \operatorname{t}(S) \le m-1$.

Special gaps are central to the study of decompositions of numerical semigroups.  If $S_1, \ldots, S_k \supseteq S$ are oversemigroups, then by \cite[Proposition~4.48]{numerical},
\begin{equation}\label{eq:char-decomp-SG}
	S=S_1\cap \dots \cap S_k \text{ if and only if }\operatorname{SG}(S)\subseteq \bigcup_{i=1}^k \operatorname{SG}(S_i). 	
\end{equation}
Moreover, the expression $S=S_1\cap \dots \cap S_k$ is irredundant if and only if for each $S_i$, we have $x \notin S_i$ for some $x \in \operatorname{SG}(S)$.  As such, any irreducible decomposition of $S$ has length at most $\#\operatorname{SG}(S)$.

\section{Sets of decomposition lengths in low multiplicity}\label{sec:low-multiplicity}

In this section we analyze the sets of decomposition lengths of numerical semigroups with multiplicity at most six, and show that they are always intervals.  

Fix a numerical semigroup $S$ and let $m = \operatorname{m}(S)$.  Any irredundant decomposition of $S$ has at most $\#\operatorname{SG}(S) \le m - 1$ components.  Thus, if $m \le 4$, then the only possible sets of decomposition lengths are $\{1\}$ (if $S$ is irreducible), $\{2\}$, $\{3\}$, or $\{2,3\}$ (if $S$ is not irreducible), all of which are intervals.  In fact, a bit more can be said, as discussed in the following remark.  

\begin{remark}\label{r:m4possibleintervals}
It turns out $\{3\}$ does not occur as the set of decomposition lengths for any numerical semigroup $S$ with multiplicity $4$, as if $S$ is not irreducible, then $S$ has a decomposition of length 2.  To~this end, it suffices to locate an irreducible $T \supseteq S$ with $\#M_T(S) = 2$, since only one additional component would then be needed to form a decomposition of $S$.  We claim that, with no assumptions on $S$ other than $\operatorname{m}(S) = 4$, there exists such a $T$ with $M_S(T) \supseteq \{1,3\}$.  Write $\Ap(S;4) = \{0,a_1,a_2,a_3\}$ with each $a_i \equiv i \bmod 4$.  
\begin{itemize}
\item 
If $a_1 \le a_3$, then let $b_2 = a_3 - a_1$ and take $T=\langle 4,a_1,b_2,a_3\rangle$.  Since $a_1 + a_2 \ge a_3$ by~\eqref{eq:membership-with-apery}, we get $a_2 \ge b_2$, and upon verifying 
\[
2a_1 \ge a_2 \ge b_2,
\quad
2a_3 \ge a_2 \ge b_2,
\quad
a_1 + b_2 = a_3,
\quad \text{and} \quad 
b_2 + a_3 \ge a_3 \ge a_1,
\]
we may apply Lemma~\ref{lem:is-Apery} to conclude $T$ is in fact a numerical semigroup with $\Ap(T)=\{0,a_1,b_2,a_3\}$.  Since $a_3 = b_1 + a_2$, we see $T$ is also symmetric by Lemma~\ref{lem:max-number-sg-bound-MST}, and by \eqref{eq:inclusion-apery}, we have $S \subseteq T$.

\item 
If $a_3 \le a_1$, then by an analogous application of Lemma~\ref{lem:is-Apery}, let $b_2 = a_1 - a_3$ and choose $T$ so that $\Ap(T;4) = \{0,a_1,b_2,a_3\}$.  

\end{itemize}
As such, the only possible sets of decomposition lengths for multiplicity 4 numerical semigroups are $\{1\}$, $\{2\}$, and $\{2,3\}$.  
\end{remark}




For $m = 5$, a bit more work is needed. 
We first give a lemma that yields a bound on $\#\operatorname{M}_S(T)$ when $S$ has maximal number of special gaps.

\begin{lemma}\label{lem:max-number-sg-bound-MST}
Suppose $m = \operatorname{m}(S)$, $\#\operatorname{SG}(S)=m-1$, and let $T$ be irreducible with $S \subseteq T$.  Then $\#\operatorname{M}_S(T) \le \tfrac{1}{2}m$.
\end{lemma}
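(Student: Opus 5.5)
The plan is to translate the hypothesis $\#\operatorname{SG}(S)=m-1$ into conditions on $\Ap(S;m)=\{0,a_1,\dots,a_{m-1}\}$, and then use the Apéry characterization of irreducibility from Lemma~\ref{lem:pseudosymmposet} for $T$. The idea is to show that the indices $\{1,\dots,m-1\}$ split into pairs of which $\operatorname{M}_S(T)$ can contain at most one member, with at most one exceptional index.

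\emph{Step 1: conditions on $\Ap(S;m)$.} Since $\operatorname{SG}(S)\subseteq\operatorname{PF}(S)$ and $\operatorname{t}(S)\le m-1$, we get $\operatorname{PF}(S)=\operatorname{SG}(S)$ of size $m-1$. By \eqref{eq:pseudo-Frobenius-Apery}, every nonzero Apéry element is then $\preceq_S$-maximal. So:
\begin{enumerate}[(i)]
\item $a_i-a_l\notin S$ for all distinct $i,l$;
\item $2(a_i-m)\in S$ for every $i$.
\end{enumerate}

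\emph{Step 2: additive relations in $\Ap(T;m)$.} Write $\Ap(T;m)=\{0,b_1,\dots,b_{m-1}\}$ with $b_i\le a_i$ by \eqref{eq:inclusion-apery}, and let $b_k$ be the maximum. For any $i\ne k$ with $b_i\preceq_T b_k$, we have $b_k-b_i\in T$ and $b_k-b_i\equiv k-i$, so $b_k-b_i\ge b_{k-i}$. On the other hand $b_i+b_{k-i}\in T$ is congruent to $k$, so $b_i+b_{k-i}\ge b_k$. Hence
\[
b_k=b_i+b_{k-i}.
\]
By Lemma~\ref{lem:pseudosymmposet}, this holds for every $i\ne k$ when $T$ is symmetric. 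When $T$ is pseudo-symmetric, it holds for every $i\ne k$ except the special index $j$ with $2b_j=b_k+m$; this index satisfies $2j\equiv k$, so it is a fixed point of the involution $i\mapsto k-i$.

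\emph{Step 3: the involution $i\mapsto k-i$ on $\{1,\dots,m-1\}\setminus\{k\}$.}
\begin{itemize}
\item[(a)] \emph{A pair $\{i,k-i\}$ with $i\ne k-i$.} If both indices lay in $\operatorname{M}_S(T)$, then $a_k\ge b_k=a_i+a_{k-i}$. But $a_i+a_{k-i}\in S$ is congruent to $k$, so $a_i+a_{k-i}\ge a_k$, forcing equality. Then $a_k-a_i=a_{k-i}\in S$, contradicting (i).
\item[(b)] \emph{A fixed point $i$ with $b_k=2b_i$.} If $i\in\operatorname{M}_S(T)$, then $a_k\ge b_k=2a_i\ge a_k$, so $a_k-a_i=a_i\in S$, again contradicting (i).
\item[(c)] \emph{The pseudo-symmetric exceptional index $j$.} If $j\in\operatorname{M}_S(T)$, then $2a_j=b_k+m\le a_k+m$. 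But (ii) gives $2a_j-2m\in S$, and $2a_j-2m\equiv k$, so $2a_j-2m\ge a_k$. This yields $a_k+2m\le a_k+m$, which is absurd.
\end{itemize}
So no fixed point lies in $\operatorname{M}_S(T)$, and each genuine pair contributes at most one index.

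\emph{Step 4: counting.} Let $f\in\{0,1,2\}$ be the number of fixed points. Then
\[
\#\operatorname{M}_S(T)\le 1+\frac{m-2-f}{2}\le \frac{m}{2},
\]
where the $1$ accounts for the index $k$ itself.

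\emph{Main obstacle.} The delicate point is case (c): the special index $j$ of a pseudo-symmetric $T$ does not satisfy the additive relation of Step 2. The plan is to handle it directly with the special-gap condition (ii), which is exactly where "special" rather than merely "pseudo-Frobenius" is needed. A secondary check is that the two possible fixed points when $m$ is even are both covered, one by (b) and one by (c).
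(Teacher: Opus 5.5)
Your proof is correct and follows essentially the paper's argument: the same pairing $i \leftrightarrow k-i$ (the paper's blocks $\{i,j\}$ with $i+j\equiv k \bmod m$), the same contradiction with $\preceq_S$-maximality of the $a_i$ when both members of a pair lie in $\operatorname{M}_S(T)$, and the special-gap condition $2(a_j-m)\in S$ to handle the exceptional index of a pseudo-symmetric $T$. The difference is only organizational: you exclude every fixed point from $\operatorname{M}_S(T)$ directly and then count, which gives the slightly sharper bound $\tfrac{1}{2}(m-f)$, while the paper argues by contradiction from $\#\operatorname{M}_S(T)>\tfrac{1}{2}m$; your case~(c), which uses only $b_k\le a_k$ to reach $2a_j=b_k+m\le a_k+m$ against $2a_j-2m\ge a_k$, is also a cleaner version of the paper's odd-$m$ step.
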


\begin{proof}
Write $\Ap(S) = \{0, a_1, \ldots, a_{m-1}\}$ and $\Ap(T) = \{0, b_1, \ldots, b_{m-1}\}$ with each $a_i\equiv b_i \equiv i \bmod m$, and let $k \in \{1, \ldots, m-1\}$ so that $\operatorname{F}(T) = b_k - m$.  Notice that we cannot have $b_i + b_j = b_k$ with $i, j \in \operatorname{M}_S(T)$, as 
\[a_k \le a_i + a_j = b_i + b_j = b_k \le a_k\]
would force $a_i + a_j = a_k$, contradicting the assumption that $a_i - m \in \operatorname{SG}(S)$ since $a_j \in S$ and $(a_i - m) + a_j = a_k - m \notin S$ (and so $a_i-m\not\in \operatorname{PF}(S)$).

Now, consider the partition of $\{0, \ldots, m-1\}$ whose blocks have the form $\{i,j\}$ for (not necessarily distinct) $i, j$ with $i + j \equiv k \bmod m$.  The number of singleton blocks in this partition depends on the number of solutions $x \in \{0,\dots,m-1\}$ of the equation $2x \equiv k \bmod m$. If $m$ is odd, then this equation as a single solution, and thus the partition has $\tfrac{1}{2}(m+1)$ blocks. If $m$ is even, then the equation has either zero or two solutions. In the first case, the partition has $\tfrac{1}{2}m$ blocks, and in the second case, it has $\tfrac{1}{2}(m+2)$ blocks.	

Observe that each block considered in the preceding paragraph can contain at most one element of $\operatorname{M}_S(T)$, since if $\{i,j\} \subseteq \operatorname{M}_S(T)$ with $i \ne j$ then it would force $b_i + b_j = b_k$, which we showed in the first paragraph cannot hold.

Towards a contradiction, assume $\#\operatorname{M}_S(T) > \tfrac{1}{2}m$. 
\begin{itemize}
	\item If $m$ is odd, this forces $\#\operatorname{M}_S(T) = \tfrac{1}{2}(m+1)$, with each block of the partition containing exactly one element of $\operatorname{M}_S(T)$, and only one block being a singleton. Let $\{i\}$ be this singleton block. Then, $2b_i = b_k$ or $2b_i = b_k + m$. We have seen that the first case is impossible. The block $\{0,k\}$ must contain an element of $\operatorname{M}_S(T)$, which must be $k$, meaning that $b_k = a_k$. Thus, in the second case, $a_k\le 2a_i\le 2b_i = b_k=a_k$. This yields $a_k = 2a_i$, and then $a_i+(a_i-m)=a_k - m$, which implies that $a_i - m \notin \operatorname{SG}(S)$, a contradiction.
     
	\item If $m$ is even, then we must have $\#\operatorname{M}_S(T) = \tfrac{1}{2}(m+2)$ and two distinct singleton blocks $\{i\}$ and $\{j\}$ with $i, j \in \operatorname{M}_S(T)$; this is only possible if $T$ is pseudosymmetric with $\{2b_i, 2b_j\} = \{b_k, b_k + m\}$, meaning $2b_i = b_k$ or $2b_j = b_k$, either of which is a contradiction. 
\end{itemize}
In each case, we arrive at a contradiction, so we conclude $\#\operatorname{M}_S(T) \le \tfrac{1}{2}m$.  
\end{proof}

\begin{thm}\label{t:m5interval}
	Let $S$ be a numerical semigroup with multiplicity $m = 5$. Then, the set of decomposition lengths of $S$ into irreducibles is an interval.
\end{thm}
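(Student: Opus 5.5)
Since $m=5$, every irredundant decomposition has length at most $\#\operatorname{SG}(S)\le 4$. The only non-intervals contained in $\{1,2,3,4\}$ that could occur for a non-irreducible $S$ are $\{2,4\}$ and $\{2,3,4\}$ minus $3$, that is, a set containing both $2$ and $4$ but not $3$. (If $S$ is irreducible, its set of lengths is $\{1\}$; if $S$ is not irreducible, $1$ never occurs.) So the plan is to show: if $S$ has decompositions of lengths $\ell<\ell'$ with $\ell'-\ell\ge 2$, then it has one of every intermediate length. Concretely, it suffices to show that a decomposition of length $4$ together with one of length $2$ forces one of length $3$.

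Suppose $S$ has a decomposition of length $4$. Irredundancy forces $\#\operatorname{SG}(S)=4=m-1$, since each component must miss a distinct special gap. Lemma~\ref{lem:max-number-sg-bound-MST} then gives $\#\operatorname{M}_S(T)\le \tfrac52$, i.e.\ $\#\operatorname{M}_S(T)\le 2$ for every irreducible $T\supseteq S$. By~\eqref{eq:char-M_S}, a decomposition of length $2$ therefore requires two irreducibles $T_1,T_2$ with $\operatorname{M}_S(T_1)\sqcup\operatorname{M}_S(T_2)=\{1,2,3,4\}$, each set of size exactly $2$.

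The intended step is to construct a length-$3$ decomposition from such $T_1,T_2$. Writing $\operatorname{M}_S(T_1)=\{i,j\}$, we want irreducibles $T_1',T_1''\supseteq S$ with $i\in\operatorname{M}_S(T_1')$, $j\in\operatorname{M}_S(T_1'')$, such that neither $T_1'$ nor $T_1''$ covers $\operatorname{M}_S(T_2)$ together with the other. Then $\{T_1',T_1'',T_2\}$ is an irredundant cover. Since all $M$-sets have size at most $2$, irredundancy of a 3-cover of a 4-set holds as soon as no single member together with another covers everything. Following Remark~\ref{r:m4possibleintervals}, the easiest choice is $\#\operatorname{M}_S(T_1')=1$. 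For each $i$ there is an irreducible $T\supseteq S$ with $\operatorname{F}(T)=a_i-5$ (any irreducible maximal among oversemigroups of $S$ avoiding the special gap $a_i-5$), and $i\in\operatorname{M}_S(T)$. If it happens that $\operatorname{M}_S(T)=\{i\}$, we combine $T$ (for $i$), $T$ (for $j$) and $T_2$. Otherwise we use the partition argument from the proof of Lemma~\ref{lem:max-number-sg-bound-MST} to pin down the possible $2$-element $M$-sets: blocks $\{i',j'\}$ with $i'+j'\equiv k$ contain at most one element, which restricts them for $m=5$ to a short list.

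Alternatively, one can argue by the structure of all $2$-element $M$-sets of irreducible oversemigroups: they form a graph $G$ on $\{1,2,3,4\}$. Lengths $2$, $3$, $4$ correspond to a perfect matching, to a cover by an edge and two vertices (or by a path of length $3$ whose middle edge is redundant-free), and to four vertices. A perfect matching $\{i,j\},\{i',j'\}$ together with the singletons realized by the $T$'s above gives a cover $\{i,j\},\{i'\},\{j'\}$, which is irredundant precisely when $\{i'\}$ and $\{j'\}$ are realized by $M$-sets not containing $i$ or $j$.

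The main obstacle is guaranteeing such restricted $M$-sets: an irreducible $T$ with $i'\in\operatorname{M}_S(T)$ but $\operatorname{M}_S(T)\cap\{i,j\}=\emptyset$. I would attack it by explicit Apéry-set construction via Lemma~\ref{lem:is-Apery} and Lemma~\ref{lem:pseudosymmposet}, taking $b_{i'}=a_{i'}$ as the maximum and lowering the other entries to $b_\ell=a_{i'}-b_{\ell'}$-type values, with a finite case split on $k$ and the residues, as in Remark~\ref{r:m4possibleintervals}.
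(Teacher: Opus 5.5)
Your reduction is correct: ruling out $\{2,4\}$ suffices, a length-$4$ decomposition forces $\#\operatorname{SG}(S)=4$, and Lemma~\ref{lem:max-number-sg-bound-MST} then forces $\operatorname{M}_S(T_1)=\{i,j\}$ and $\operatorname{M}_S(T_2)=\{i',j'\}$ to be complementary $2$-sets. However, the step you call the ``main obstacle'' is left unresolved. That step is producing irreducible oversemigroups whose $\operatorname{M}$-sets are $\{i'\}$ and $\{j'\}$, avoiding $i$ and $j$. The Apéry-set construction with a case split is only sketched, so as written the argument is incomplete. Your maximal oversemigroup with Frobenius number $a_i-5$ only guarantees $i\in\operatorname{M}_S(T)$, not $\operatorname{M}_S(T)=\{i\}$.

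The missing idea is that you use the length-$4$ decomposition $S=S_1\cap\cdots\cap S_4$ only to get $\#\operatorname{SG}(S)=4$, and discard the rest of its information. By~\eqref{eq:char-M_S}, the sets $\operatorname{M}_S(S_1),\ldots,\operatorname{M}_S(S_4)$ form an irredundant cover of the $4$-element set $\{1,2,3,4\}$. Each set must contain an element lying in no other set, and these private elements are distinct, so all four elements are private. Hence the sets are pairwise disjoint, and so each is a singleton; after relabeling, $\operatorname{M}_S(S_\ell)=\{\ell\}$. These are exactly the restricted $\operatorname{M}$-sets you were trying to construct. The sets $\{i,j\}$, $\{i'\}$, $\{j'\}$ partition $\{1,2,3,4\}$, hence form an irredundant cover, so $S=T_1\cap S_{i'}\cap S_{j'}$ is a decomposition of length $3$. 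This is the paper's proof. No construction via Lemma~\ref{lem:is-Apery} is needed for $m=5$, because both the two-element $\operatorname{M}$-sets (from $T_1,T_2$) and the singletons (from the $S_\ell$) are supplied by the hypotheses.
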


\begin{proof}

	If $S$ is irreducible, then its set of decomposition lengths is $\{1\}$.  Otherwise, it is contained in $\{2,3,4\}$, so it suffices to prove the set of decomposition lengths of $S$ does not equal $\{2,4\}$.  To this end, suppose $S = S_1 \cap \cdots \cap S_4 = T_1 \cap T_2$ are decompositions of $S$.  Since the first decomposition is irredundant, by \eqref{eq:char-decomp-SG}, $\#\operatorname{SG}(S) = 4$.  By~Lemma~\ref{lem:max-number-sg-bound-MST}, $\#\operatorname{M}_S(T_1) \le 2$ and $\#\operatorname{M}_S(T_2) \le 2$, but since $\#(\operatorname{M}_S(T_1) \cup \operatorname{M}_S(T_2)) = 4$, we must have $\#\operatorname{M}_S(T_1) = \#\operatorname{M}_S(T_2) = 2$.  Since the sets $\operatorname{M}_S(S_i)$ form a partition of $\{1,2,3,4\}$ into singletons, there exist distinct $i_1, i_2 \in \{1,2,3,4\}$ so $S=T_1\cap S_{i_1}\cap S_{i_2}$ is a decomposition of $S$ with three factors by \eqref{eq:char-decomp-SG}.  
\end{proof}

Next, we turn our attention to numerical semigroups $S$ with $\operatorname{m}(S) = 6$.  We first prove the following technical lemma that, in similar fashion to Remark~\ref{r:m4possibleintervals}, ensures the existence of certain irreducible oversemigroups $T$ for which the set $\operatorname{M}_S(T)$ is tightly controlled.  

\begin{lemma}\label{l:m6interval}
Fix a numerical semigroup $S$ with $\Ap(S) = \{0, a_1, \ldots, a_5\}$.  
\begin{enumerate}[(a)]
\item 
There exist irreducible semigroups $T, T' \supseteq S$ with $2, 5 \in \operatorname{M}_S(T)$ and $1, 4 \in \operatorname{M}_S(T')$.  

\item 
If $\#\operatorname{SG}(S) \ge 4$, then $T$ and $T'$ can be chosen so that $\#\operatorname{M}_S(T) \le 3$, $\#\operatorname{M}_S(T') \le 3$, and $\operatorname{M}_S(T) \textcolor{blue}{\cup} \operatorname{M}_S(T') \subseteq \{1,2,4,5\}$.  



\item If $\#\operatorname{SG}(S) = 5$, then $T$ and $T'$ can be chosen so that additionally $\#\operatorname{M}_S(T) = 2$ or $\#\operatorname{M}_S(T') = 2$.
\end{enumerate}
\end{lemma}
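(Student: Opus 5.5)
The plan follows Remark~\ref{r:m4possibleintervals}. For each pair $\{2,5\}$ and $\{1,4\}$ I would write down an explicit candidate Apéry set $\{0,b_1,\dots,b_5\}$ with $b_i \le a_i$ for all $i$ and equality on the prescribed pair. I would check it is an Apéry set via Lemma~\ref{lem:is-Apery}, and check irreducibility via Lemma~\ref{lem:pseudosymmposet}. Then \eqref{eq:inclusion-apery} gives $S \subseteq T$ together with the stated membership in $\operatorname{M}_S(T)$. The map $i \mapsto 6-i$ does not turn $\{2,5\}$ into $\{1,4\}$. So I would carry out the construction for $T$ and then repeat it, with the analogous residue bookkeeping, for $T'$.

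\emph{Construction of $T$ for part (a).} I split according to which of $a_2, a_5$ is larger.

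If $a_2 \le a_5$, I aim for a symmetric $T$ with maximal Apéry element $b_5 = a_5$. Put $b_2 = a_2$ and $b_3 = a_5 - a_2$; this is legitimate because $2+3 \equiv 5$, and $b_3 \le a_3$ because $a_2 + a_3 \ge a_5$.

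For the remaining pair $\{1,4\}$ I need $b_1 + b_4 = a_5$ with $b_1 \le a_1$ and $b_4 \le a_4$. I would take $b_1$ as small as possible, i.e.\ the least positive integer $\equiv 1$ compatible with the inequalities, and set $b_4 = a_5 - b_1$. The bound $b_4 \le a_4$ should follow from $a_1 + a_4 \ge a_5$.

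It then remains to verify the inequalities $b_i + b_j \ge b_{i+j}$ of Lemma~\ref{lem:is-Apery}. Each inequality either reduces to an inequality among the $a$'s (again from Lemma~\ref{lem:is-Apery} applied to $S$), or is automatic because $b_5$ is the maximum. If some inequality fails, I would instead let $b_1$ be the largest admissible value. Symmetry then follows from Lemma~\ref{lem:pseudosymmposet}(a), since every $b_i$ has a complement $b_{5-i}$ with $b_i + b_{5-i} = b_5$.

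If $a_5 < a_2$, I use the same scheme with $b_2 = a_2$ as the maximum. Its complementary pairs are $\{5,3\}$, $\{1,1\}$, $\{4,4\}$ and $\{0,2\}$. Because of the two self-paired residues $1$ and $4$, I expect to need a pseudosymmetric $T$ as in Lemma~\ref{lem:pseudosymmposet}(b), with either $2b_1$ or $2b_4$ equal to $b_2 + 6$.

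\emph{Part (b).} Assume $\#\operatorname{SG}(S) \ge 4$. The key observation is the one in the first paragraph of the proof of Lemma~\ref{lem:max-number-sg-bound-MST}: if $i, j \in \operatorname{M}_S(T)$ satisfy $b_i + b_j = b_k$ and $a_i - 6$ is a special gap, we get a contradiction.

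In the construction $b_3 = a_5 - a_2$, so $3 \in \operatorname{M}_S(T)$ would force $a_2 + a_3 = a_5$, killing the special gaps at residues $2$ and $3$. Having $\#\operatorname{SG}(S) \ge 4$ means at most one residue is non-special. Using this, I would show that the construction can be arranged so that residue $3$ is not in $\operatorname{M}_S(T)$. If needed, I would lower $b_3$ further, or switch which of $b_1, b_4$ is chosen minimal.

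The same argument shows that at most one of $1$ and $4$ lies in $\operatorname{M}_S(T)$. Together these give $\operatorname{M}_S(T) \subseteq \{1,2,4,5\}$ and $\#\operatorname{M}_S(T) \le 3$, and likewise for $T'$.

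\emph{Part (c).} Assume $\#\operatorname{SG}(S) = 5$. Now every residue is special, so the contradiction above applies to every pair. The free choice of $b_1$ (minimal versus maximal) is what I expect to use. If both choices give $\#\operatorname{M}_S(T) = 3$, then equalities such as $b_1 = a_1$ and $b_4 = a_4$ hold on both sides. I would then transfer these equalities to the $T'$ construction, where they force $\#\operatorname{M}_S(T') = 2$.

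\emph{Main obstacle.} The case analysis in (b) and (c) is the hardest part. With only four special gaps one residue may be non-special, so the choices in the construction must adapt to which residue it is. I expect this to require several subcases.
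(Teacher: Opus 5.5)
Your constructions for (a) agree with the paper's. When $a_5>a_2$ you take a symmetric $T$ with $b_3=a_5-a_2$ and $b_1+b_4=a_5$. When $a_2>a_5$ you take a pseudosymmetric $T$ with $b_3=a_2-a_5$ and $\{2b_1,2b_4\}=\{a_2,a_2+6\}$.

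\textbf{Part (c) is not proved.} The sentence ``transfer these equalities to the $T'$ construction, where they force $\#\operatorname{M}_S(T')=2$'' is the whole content of (c), and you give no mechanism. The missing step is to determine exactly when $T$ fails.
\begin{itemize}
\item In the pseudosymmetric case there is no free choice of $b_1$: it is determined by $\{2b_1,2b_4\}=\{a_2,a_2+6\}$. Here $\#\operatorname{SG}(S)=5$ forces $\operatorname{M}_S(T)=\{2,5\}$ outright, by the special-gap argument below.
\item In the symmetric case, $a_i-6\in\operatorname{SG}(S)$ gives $2(a_i-6)\ge a_2$ for $i=1,4$. So $b_1\le a_1-6$ and $b_4\le a_4-6$ are compatible with $b_1,b_4\ge\tfrac{1}{2}a_2$. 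Hence $\operatorname{M}_S(T)=\{2,5\}$ is attainable exactly when $a_1+a_4\ge a_5+12$.
\item The value $a_1+a_4=a_5$ would rule out two pseudo-Frobenius numbers. Therefore $T$ fails only when $a_5>a_2$ and $a_1+a_4=a_5+6$.
\end{itemize}
The failure condition for $T'$ is the image of this under $i\mapsto -i$, namely $a_1>a_4$ and $a_2+a_5=a_1+6$. The two conditions are incompatible. Adding the equalities gives $a_2+a_4=12$, which is impossible since $a_2\ge 8$ and $a_4\ge 10$. Equivalently, $a_1+a_4=a_5+6$ forces $a_5>a_1$, hence $a_2+a_5\ge a_1+12$ by congruence.

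Your claim that $i\mapsto 6-i$ does not send $\{2,5\}$ to $\{1,4\}$ is wrong, since $2\mapsto 4$ and $5\mapsto 1$. This automorphism is exactly how the paper obtains $T'$ and the dual failure condition with no extra work.

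\textbf{Part (b) and the pseudosymmetric half of (c) have a further hole.} Your key observation only eliminates pseudo-Frobenius numbers via relations $b_i+b_j=b_k$. In the pseudosymmetric case, the index $j\in\{1,4\}$ with $2b_j=a_2+6$ enters no such relation, and $b_1+b_4=a_2+3\neq b_5$. So from $1,4\in\operatorname{M}_S(T)$ you eliminate only one residue, which does not contradict $\#\operatorname{SG}(S)\ge 4$. You need the special-gap condition itself: $b_j=a_j$ gives $2(a_j-6)=a_2-6\notin S$, so $a_j-6\notin\operatorname{SG}(S)$.

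Finally, ``lower $b_3$ further'' is not available, because the Apéry inequalities ($b_2+b_3\ge b_5$, resp.\ $b_3+b_5\ge b_2$) force $b_3\ge|a_5-a_2|$. It is also unnecessary: your own remark already shows $3\notin\operatorname{M}_S(T)$ once $\#\operatorname{SG}(S)\ge 4$.
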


\begin{proof}
We begin by constructing $T$.  First suppose $a_2 > a_5$.  Define $b_1, b_3, b_4 \in \mathbb{Z}_{\ge 1}$ so that each $b_i \equiv i \bmod 6$,
$$
b_3 = a_2 - a_5,
\qquad \text{and} \qquad
\{2b_1, 2b_4\} = \{a_2, a_2 + 6\},
$$
and let $T = \<6, b_1, a_2, b_3, b_4, a_5\>$.  
We use Lemma~\ref{lem:is-Apery} to show that 
\[
\Ap(T) = \{0, b_1, a_2, b_3, b_4, a_5\}.
\]
Omitting the cases where $i+j=6\equiv 0 \pmod 6$), we see
\begin{itemize}
\item $b_1 +b_1 \ge a_2$, since $2b_1 \in \{a_2, a_2 + 6\}$;
\item $b_1+a_2 \ge b_3$, since $a_2=b_3 + a_5\ge b_3$;
\item $b_1+b_3 \ge b_1 + 6 \ge b_4$, since $|b_1-b_4|=3$;
\item $b_1+b_4 = \frac{1}{2}(2a_2 + 6) = a_2 + 3 \ge a_5$ (by the hypothesis $a_2 > a_5$);
\item $a_2+a_2 \ge a_2 + 6 \ge b_4$:
\item $a_2+b_3 \ge a_2 \ge a_5$;
\item $a_2+a_5 \ge a_2+6 \ge b_1$;
\item $b_3+b_4 \ge 6+b_4\ge b_1$, since $|b_1-b_4|=3$;
\item $b_3+a_5 = a_2\ge a_2$;
\item $b_4+b_4 \ge a_2$, since $2b_4 \in \{a_2, a_2 + 6\}$;
\item $b_4+a_5 \ge b_3$, since $b_4\ge \frac{1}{2}a_2$ (by definition) and $4a_5 \ge a_2$ (by \eqref{eq:membership-with-apery}) together imply $b_4+a_5\ge \frac{1}{2}a_2 + \frac{1}{4}a_2 = a_2 - \tfrac{1}{4}a_2 \ge a_2 - a_5 = b_3$; and
\item $a_5+a_5\ge b_4$, since $2a_5\ge a_4$ and $2b_4\le a_2+6\le 2a_4+6=2(a_4+3)$ (by~\eqref{eq:membership-with-apery}) imply $b_4 \le a_4 + 3$, and $b_4 \equiv a_4 \equiv 4 \bmod 6$ then yields $b_4 \le a_4$.
\end{itemize}
With these inequalities, we conclude that $\Ap(T) = \{0, b_1, a_2, b_3, b_4, a_5\}$. 

Now we prove that $S\subseteq T$, and to this end we use \eqref{eq:inclusion-apery} by showing that $b_1\le a_1$, $b_3 \le a_3$, and $b_4 \le a_4$.  We have already proved that $b_4\le a_4$, so let us focus on the other two inequalities. Since $2b_1\le a_2+6\le 2a_1+6=2(a_1+3)$ (we are using that $a_2 \le 2a_1$ by \eqref{eq:membership-with-apery}), we have $b_1 \le a_1 + 3$, and both are congruent to $1$ mod $6$, so $b_1 \le a_1$.
Also, we have that $a_3+a_5\ge a_2$ (by \eqref{eq:membership-with-apery}), and so $a_3\ge a_2-a_5=b_3$. 

Next, we show that $T$ is pseudo-symmetric.  Notice that $b_3+a_5=a_2$ and so $b_3 \preceq_S a_2$ and $a_5 \preceq_S a_2$. We distinguish two cases.  If $2b_1 = a_2$, then $2b_4 = a_2 + 6$, and $b_1\preceq_S a_2$. By Lemma~\ref{lem:pseudosymmposet}, $T$ is pseudo-symmetric. If $2b_4 = a_2$, then $2b_1 = a_2 + 6$, and we argue as in the previous case.

We now verify the claims in part~(b) regarding $T$ in this case.  
If~$b_3 = a_3$, then $a_2 = a_3 + a_5$.  This means $\operatorname{Ap}(S)$ has at most three maximal elements under $\preceq_S$, implying $\#\operatorname{SG}(S) \le \#\operatorname{PF}(S)\le 3$.  Also, if $b_i = a_i$ for $i \in \{1,4\}$, then $2(a_i - 6) < a_2$ and thus $2(a_i-6)\notin S$ by~\eqref{eq:membership-with-apery}, so $a_i - 6 \notin \operatorname{SG}(S)$.  We conclude if $\#\operatorname{SG}(S) \ge 4$, then either $a_1 > b_1$ or $a_4 > b_4$, so $\#\operatorname{M}_S(T) \le 3$ and $3 \notin \operatorname{M}_S(T)$.  
We also note that 
\begin{equation}\label{eq:m6lemmacase1extra}
\text{in this case}, \quad \text{if } \#\operatorname{SG}(S) = 5, \quad \text{then } M_S(T) = \{2,5\},
\end{equation}
which will be important in the proof of part~(c).  

Next, suppose $a_5 > a_2$.  Let $b_3 = a_5 - a_2$, and fix $b_1, b_4 \ge 0$ so each $b_i \equiv i \bmod 6$,
$$
a_1 \ge b_1 \ge \tfrac{1}{2}a_2,
\qquad
a_4 \ge b_4 \ge \tfrac{1}{2}a_2,
\qquad \text{and} \qquad
b_1 + b_4 = a_5;
$$
note this is always possible since $a_1 + a_4 \ge a_5$ and $\tfrac{1}{2}a_2 + \tfrac{1}{2}a_2 = a_2 \le a_5$.  
As in the previous case, we claim $T = \<6, b_1, a_2, b_3, b_4, a_5\>$ is an oversemigroup of $S$ with $\Ap(T) = \{0, b_1, a_2, b_3, b_4, a_5\}$.  

We start by proving that $\Ap(T) = \{0, b_1, a_2, b_3, b_4, a_5\}$, with the help of Lemma~\ref{lem:is-Apery}:
\begin{itemize}
	\item $b_1 +b_1\ge a_2$, since $b_1 \ge \tfrac{1}{2}a_2$;
	\item $b_1 + a_2 \ge b_1 + (b_4 - a_2) = a_5 - a_2 = b_3$, since $2a_2\ge a_4$ by \eqref{eq:membership-with-apery} and $a_4 \ge b_4$;
	\item $b_1 + b_3 = b_1 + (a_5 - a_2) \ge a_5 - b_1 = b_4$, since $b_3= a_5 - a_2$ and $b_1\ge \frac{1}{2}a_2$;
	\item $b_1+b_4 = a_5\ge a_5$;
	\item $a_2 +a_2 \ge b_4$, since $b_4 \le a_4$ and $2a_2 \ge a_4$ by \eqref{eq:membership-with-apery};
	\item $a_2 + b_3 =a_2 + (a_5 - a_2) = a_5 \ge a_5$;
	\item $a_2 + a_5 \ge a_1 \ge b_1$ by \eqref{eq:membership-with-apery};
	\item $b_3 + b_4 = (a_5 - a_2) + b_4 \ge a_5 - b_4 = b_1$, since $b_4 \ge \frac{1}{2}a_2$;
	\item $b_3 + a_5 \ge a_5\ge a_2$;
	\item $b_4+b_4 \ge a_2$, since $b_4 \ge \tfrac{1}{2}a_2$;
	\item $b_4 + a_5 = b_4 + (a_2 + b_3) \ge b_3$; and
	\item $a_5+a_5 \ge a_4 \ge b_4$ by \eqref{eq:membership-with-apery}.
\end{itemize}

Now, since $b_1\le a_1$ and $b_4 \le a_4$, in order to prove $S\subseteq T$ using \eqref{eq:inclusion-apery}, it suffices to show that $b_3 \le a_3$.  This follows from the fact that $a_3 + a_2 \ge a_5$ by \eqref{eq:membership-with-apery}, whence $a_3 \ge a_5 - a_2 = b_3$.

We now prove that $T$ is symmetric.  Notice that $b_3 + a_2 = a_5$, so $b_3 \preceq_S a_2$ and $a_5 \preceq_S a_2$ hold. Also, $b_1+b_4 = a_5$, and thus $b_1 \preceq_S a_5$ and $b_4 \preceq_S a_5$.  Thus, by Lemma~\ref{lem:pseudosymmposet}, $T$ is symmetric.  This proves part~(a).

We now verify the claims in part~(b) regarding $T$ in this case.  If $a_1 + a_4 = a_5$, then $\#\operatorname{SG}(S)\le \#\operatorname{PF}(S)\le 3$.  Thus, if $\#\operatorname{SG}(S) \ge 4$, then $a_1 + a_4\ge a_5 + 6$ by~\eqref{eq:membership-with-apery}.
Also, notice that if both $a_1 - 6 \notin \operatorname{SG}(S)$ and $a_4 - 6 \notin \operatorname{SG}(S)$, then $\#\operatorname{SG}(S) \le 3$. As~we are assuming $\#\operatorname{SG}(S) \ge 4$, at least one of $a_1 - 6$ or $a_4 - 6$ must lie in $\operatorname{SG}(S)$.  If $a_i-6\in \operatorname{SG}(S)$  for $i\in \{1,4\}$, then $2(a_i - 6) \ge a_2$ by~\eqref{eq:membership-with-apery} since $2(a_i-6)\in S$, and so $a_i - 6 \ge \frac{1}{2}a_2$. We now consider all possible cases.  

\begin{itemize}
\item 
Suppse $a_1-6\in \operatorname{SG}(S)$ and $a_4 - 6 \notin \operatorname{SG}(S)$.  Choosing $b_1=a_1-6$ and then $b_4=a_5-b_1=a_5-a_1+6$, we get $b_4=a_5-a_1+6\le a_4$ since $a_1+a_4\ge a_5+6$.  In this setting, $\{2,5\} \subseteq \operatorname{M}_S(T) \subseteq \{2,5,4\}$, and we have $\{2,5,4\}=\operatorname{M}_S(T)$ if and only if $a_4=b_4$, which is equivalent to $a_1+a_4=a_5+6$.  

\item 
Suppose $a_4-6\in \operatorname{SG}(S)$ and $a_1 - 6 \notin \operatorname{SG}(S)$. Choosing $b_4=a_4-6$ and then $b_1=a_5-b_4=a_5-a_4+6$, we get $b_1=a_5-a_4+6\le a_1$ since $a_1+a_4\ge a_5+6$.  In this case, $\{2,5\} \subseteq \operatorname{M}_S(T)\subseteq \{1,2,5\}$, and we have $\{1,2,5\}=\operatorname{M}_S(T)$ if and only if $a_1=b_1$, which is equivalent to $a_1+a_4=a_5+6$.

\item Finally, suppose $a_1 - 6, a_4 - 6 \in \operatorname{SG}(S)$. 
If $a_1 + a_4 \ge a_5 + 12$, then we can choose $b_i = a_i - 6$ for both $i = 1$ and $i = 4$, yielding $\operatorname{M}_S(T) = \{2,5\}$.
If, on the other hand,  $a_1 + a_4 = a_5 + 6$, we proceed as in one of the first two cases:\ choose $b_1=a_1-6$ and $b_4=a_5-b_1=a_5-a_1+6=a_4$, obtaining $\operatorname{M}_S(T) = \{2,5,4\}$; or choose $b_4=a_4-6$ and $b_1=a_5-b_4=a_5-a_4+6=a_1$, obtaining $\operatorname{M}_S(T) = \{1,2,5\}$. 

\end{itemize}
In each case, we conclude if $\#\operatorname{SG}(S) \ge 4$, then $\#\operatorname{M}_S(T) \le 3$ and $3 \notin \operatorname{M}_S(T)$.   Also,
\begin{equation}\label{eq:m6lemmacase2extra}
\text{in this case}, \quad \text{if } \#\operatorname{SG}(S) = 5, \quad \text{then } M_S(T) = \{2,5\} \text{ or } a_1 + a_4 = a_5 + 6,
\end{equation}
which will be important in the proof of part~(c).  

This completes the construction of $T$.  To construct $T'$, simply apply the nontrivial automorphism of $\ZZ_6$ to all subscripts in the proof thus far (under this automorphism, the condition $a_2>a_5$ becomes $a_4>a_1$ and $a_1+a_4=a_5+6$ becomes $a_2+a_5=a_1+6$).

Having now proven parts~(a) and~(b), and the only remaining claim to prove is that if $\#\operatorname{SG}(S) = 5$, then $\#\operatorname{M}_S(T) = 2$ or $\#\operatorname{M}_S(T') = 2$.  
If $T$ cannot be chosen so that $\#\operatorname{M}_S(T) = 2$, then by~\eqref{eq:m6lemmacase1extra}, we must have $a_5 > a_2$, and thus by~\eqref{eq:m6lemmacase2extra}, we must have $a_1 + a_4 = a_5 + 6$.  In particular $a_5 > a_1$.  As such, $a_2 + a_5 > a_1 + 6$, meaning $T'$ can be chosen so that $\#\operatorname{M}_S(T') = 2$ by~\eqref{eq:m6lemmacase1extra} and~\eqref{eq:m6lemmacase2extra} from the construction of $T'$.  
\end{proof}

\begin{thm}\label{t:m6interval}
Let $S$ be a numerical semigroup with multiplicity six.  
\begin{enumerate}[(a)]
\item 
If $S$ has a decomposition of length 5, then $S$ has a decomposition of length 4.  
\item 
If $S$ has a decomposition of length 4, then $S$ has a decomposition of length 3.  
\end{enumerate}
In particular, the set of decomposition lengths of $S$ is an interval.
\end{thm}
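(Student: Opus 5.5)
The plan is to work throughout with the covering reformulation~\eqref{eq:char-M_S}: a decomposition $S = S_1 \cap \cdots \cap S_k$ corresponds to an irredundant cover of $\{1,\dots,5\}$ by the sets $\operatorname{M}_S(S_i)$. More precisely, by~\eqref{eq:char-decomp-SG} irredundancy is governed by special gaps: each $S_i$ must omit some special gap of $S$, which is what forces $\#\operatorname{SG}(S)\ge k$. Since every decomposition has length at most $\#\operatorname{SG}(S)\le 5$, the final claim follows from~(a) and~(b). Indeed, the set of lengths is either $\{1\}$ or a subset of $\{2,\dots,5\}$ containing the minimum length, and (a) and (b) fill in every gap above it.

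For~(a), suppose a decomposition of length~5 exists. Then $\#\operatorname{SG}(S)=5$, and the five components each omit a distinct special gap. So each $\operatorname{M}_S(S_i)$ contains exactly one index not covered by the others; in fact the components can be reindexed as $S_1,\dots,S_5$ with $S_i$ the unique component missing the special gap $a_i-6$. By Lemma~\ref{l:m6interval}(c), there is an irreducible $T\supseteq S$ (or $T'$; the two cases are symmetric) with $\operatorname{M}_S(T)=\{2,5\}$. Then $T\cap S_1\cap S_3\cap S_4$ has intersection $S$, since $\{2,5\}\cup\{1\}\cup\{3\}\cup\{4\}$ covers. It is irredundant because $T$ is the only component containing neither $a_2-6$ nor $a_5-6$, and each $S_i$ is the unique component omitting its own special gap. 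Here I use that $\operatorname{M}_S(T)=\{2,5\}$ means $T$ contains every special gap $a_i-6$ with $i\in\{1,3,4\}$.

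For~(b), suppose a decomposition of length~4 exists, so $\#\operatorname{SG}(S)\ge 4$. Take $T,T'$ as in Lemma~\ref{l:m6interval}(b): $\{2,5\}\subseteq \operatorname{M}_S(T)$, $\{1,4\}\subseteq\operatorname{M}_S(T')$, both of size at most~3, and neither contains~$3$. Then $\operatorname{M}_S(T)\cup\operatorname{M}_S(T')=\{1,2,4,5\}$, and it remains to cover~$3$ by some component $S_j$ of the given length-4 decomposition with $3\in\operatorname{M}_S(S_j)$; such a component exists because the original decomposition covers $\{1,\dots,5\}$. The candidate decomposition is $T\cap T'\cap S_j$. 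The main obstacle is irredundancy. Each of the three components must omit some special gap not omitted by the others. Since $3$ lies in neither $\operatorname{M}_S(T)$ nor $\operatorname{M}_S(T')$, $S_j$ is essential whenever $a_3-6\in\operatorname{SG}(S)$. $T$ and $T'$ are essential provided $\operatorname{M}_S(S_j)$ does not swallow the relevant indices.

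If redundancy occurs, I would pass to a shorter decomposition rather than fix the length-3 one. Redundancy means $S$ is already covered by two of $\{T,T',S_j\}$, giving length~2, possibly after discarding. But we need length exactly~3, so instead I would split into cases:
\begin{itemize}
\item if $\#\operatorname{SG}(S)=5$, use part~(c) to get $\operatorname{M}_S(T)=\{2,5\}$; then $T,T'$ are automatically essential unless $\operatorname{M}_S(S_j)\supseteq\{1,4\}$ or $\supseteq\{2,5\}$, and in that case replace $T'$ (resp.\ $T$) by a component of the original decomposition;
\item if $\#\operatorname{SG}(S)=4$, let $i_0$ be the index with $a_{i_0}-6\notin\operatorname{SG}(S)$. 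Each of the four original components then corresponds to exactly one special gap, so the $\operatorname{M}$'s restricted to special indices are singletons. I would merge two of these singletons using $T$ or $T'$, exactly as in the proof of Theorem~\ref{t:m5interval}, checking that the merged pair avoids the other two special indices.
\end{itemize}
I expect this irredundancy bookkeeping to be the delicate step.
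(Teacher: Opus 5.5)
Part (a) and the deduction of the interval property are correct and match the paper's argument. The gap is in part (b). You use the same triple $T\cap T'\cap S_j$ as the paper and rightly identify irredundancy as the crux, but your case $\#\operatorname{SG}(S)=5$ rests on a false claim.

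Once $\operatorname{M}_S(T)=\{2,5\}$, the component $T$ is redundant as soon as $\operatorname{M}_S(T')\cup\operatorname{M}_S(S_j)\supseteq\{2,5\}$. That is, $T'$ and $S_j$ can cover $2$ and $5$ \emph{jointly}, and nothing in Lemma~\ref{l:m6interval} excludes $\operatorname{M}_S(T')=\{1,2,4\}$ together with $\operatorname{M}_S(S_j)=\{3,5\}$. This actually occurs. Take $S=\langle 6,14,16,17,21,25\rangle$, with $(a_1,\dots,a_5)=(25,14,21,16,17)$ and $\operatorname{SG}(S)=\{8,10,11,15,19\}$.
\begin{itemize}
\item $S=\langle 4,6,13\rangle\cap\langle 6,8,10,15,17\rangle\cap\langle 3,7,11\rangle\cap\langle 3,8,13\rangle$ is a length-4 decomposition with $\operatorname{M}_S$-sets $\{3,5\},\{1,5\},\{2\},\{4\}$, so necessarily $S_j=\langle 4,6,13\rangle$.
\item $T=\langle 3,7\rangle$ has $\operatorname{M}_S(T)=\{2,5\}$.
\item $T'=\langle 6,9,11,14,16\rangle$ satisfies all of (a)--(c) of the lemma (it is one of the two admissible choices in its construction) and has $\operatorname{M}_S(T')=\{1,2,4\}$.
\end{itemize}
Hence $T'\cap S_j=S$ and $T$ is redundant. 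The exceptions you do list cannot occur at all: in an irredundant cover of a 5-element set by 4 blocks every block has at most two elements, and $3\in\operatorname{M}_S(S_j)$. The fallback ``replace by a component of the original decomposition'' is also never made precise.

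What is missing is information from the \emph{proof} of Lemma~\ref{l:m6interval}, not its statement. If $\#\operatorname{SG}(S)=5$, then $\#\operatorname{M}_S(T')=3$ only happens when $a_1>a_4$ and $a_2+a_5=a_1+6$. In that case the construction allows either $2$ or $5$ as the third index. Choosing it inside $\operatorname{M}_S(S_j)$ (in the example, $T'=\langle 6,8,9\rangle$ with $\operatorname{M}_S(T')=\{1,4,5\}$) makes $T\cap T'\cap S_j$ irredundant. $T'$ itself cannot become redundant, since $\operatorname{M}_S(S_j)\not\supseteq\{1,4\}$.

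Your case $\#\operatorname{SG}(S)=4$ is likewise only a plan. The ``merge'' requires that any third element of $\operatorname{M}_S(T)$ (resp.\ $\operatorname{M}_S(T')$) be the non-special index $i_0$. This again is read off the construction:
\begin{itemize}
\item in the case $a_2>a_5$, an extra $i\in\{1,4\}$ forces $a_i-6\notin\operatorname{SG}(S)$;
\item in the case $a_5>a_2$, the extra index is non-special unless $a_1-6$ and $a_4-6$ are both special gaps.
\end{itemize}
You also need that $3$ is a special index; otherwise $T\cap T'$ would already equal $S$, contradicting $3\notin\operatorname{M}_S(T)\cup\operatorname{M}_S(T')$. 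Then you use $T$ when $i_0\in\{1,4\}$ and $T'$ when $i_0\in\{2,5\}$. You state the check (``the merged pair avoids the other two special indices'') but never carry it out, and it does not follow from the lemma's statement alone.

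Note that the paper's own proof also settles this step with ``by Lemma~\ref{l:m6interval}(b)'', and the example shows the same choice of $T'$ is needed there. You located the real difficulty, but the proposal does not resolve it.
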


\begin{proof}
Let $T$ and $T'$ denote the oversemigroups of $S$ constructed in Lemma~\ref{l:m6interval}(a).  

First, suppose $S = T_1 \cap \cdots \cap T_5$ is an irreducible decomposition. 
After rearranging, by \eqref{eq:char-M_S}, we may assume $\operatorname{M}_S(T_i) = \{i\}$ for each $i$. Since this decomposition is irredundant, $\#\operatorname{SG}(S) = 5$ (as a consequence of \eqref{eq:char-decomp-SG}).  Thus, by Lemma~\ref{l:m6interval}(c), either $\operatorname{M}_S(T) = \{2,5\}$, in which case $S = T \cap T_1\cap T_3 \cap T_4$ is a decomposition, or $\operatorname{M}_S(T') = \{1,4\}$, in which case $S = T' \cap T_2\cap T_3 \cap T_5$ is a decomposition.  This proves part~(a).  

Next, suppose $S$ has a decomposition of length four, and thus also $\#\operatorname{SG}(S) \ge 4$. By Lemma~\ref{l:m6interval}(a) and~(b), $T$ and $T'$ can be chosen so that $\operatorname{M}_S(T)\cup\operatorname{M}_S(T') = \{1,2,4,5\}$. 
As $S$ admits a decomposition of length four, \eqref{eq:char-M_S} implies there exists an irreducible oversemigroup $U$ of $S$ such that $3 \in \operatorname{M}_S(U)$ and $\#\operatorname{M}_S(U) \le 2$.  We thus obtain a length-3 decomposition $S = T\cap T'\cap U$, wherein neither $T$ nor $T'$ is redundant by Lemma~\ref{l:m6interval}(b).  This proves part~(b).  

The final claim follows from examining the non-interval subsets of $\{2,3,4,5\}$.
\end{proof}

In the remainder of this section, we give some computational results regarding which sets of decomposition lenghts occur in small multiplicity.  

\begin{example}\label{e:m5possibleintervals}
Every interval subset of $\{2,3,4\}$ occurs as the set of decomposition lengths of a multiplicity 5 numerical semigroup.  For instance, the semigroups
\begin{align*}
&S_{2} = \<5, 6, 7\>,
&
&S_{3} = \<5, 6, 13, 14\>,
&
&S_{4} = \<5, 11, 12, 13, 14\>,
\\
&S_{23} = \<5, 11, 13, 19\>,
&
&S_{34} = \<5, 12, 13, 14, 16\>,
&
&S_{234} = \<5, 21, 22, 33, 34\>
\end{align*}
achieve each such interval of decomposition lengths.  
\end{example}

\begin{example}\label{e:m6possibleintervals}
Theorem~\ref{t:m6interval} identifies the only restrictions on the set of decomposition lengths of a multiplicity-6 numerical semigroup.  Indeed, the semigroups
\begin{align*}
&S_{2} = \<6, 7, 10\>,
&
&S_{34} = \<6, 7, 15, 16, 17\>,
&
&S_{345} = \<6, 13, 14, 15, 16, 17\>,
\\
&S_{3} = \<6, 7, 8, 17\>,
&
&S_{234} = \<6, 8, 13, 15, 17\>,
&
&S_{2345} = \<6, 16, 14, 19, 21, 23\>,
\\
&S_{23} = \<6, 7, 9, 17\>
\end{align*}
achieve every interval subset of $\{2,3,4,5\}$ as a set of decomposition lengths, except those sets forbidden by Theorem~\ref{t:m6interval}.  
\end{example}

\begin{example}\label{e:m7possibleintervals}
Every interval subset of $\{2,3,4,5,6\}$ arises as the set of decomposition lengths of some numerical semigroup with multiplicity $m = 7$.  For instance,
\begin{align*}
&S_{2} = \<7, 8, 9\>,
&
&S_{234} = \<7, 15, 17, 18, 26\>,
&
&S_{6} = \<7, 22, 23, 24, 25, 26, 27\>,
\\
&S_{3} = \<7, 8, 9, 10\>,
&
&S_{5} = \<7, 15, 26, 27, 31, 32\>,
&
&S_{56} = \<7, 16, 17, 18, 19, 20, 22\>,
\\
&S_{23} = \<7, 8, 10, 11\>,
&
&S_{45} = \<7, 8, 17, 18, 19, 20\>,
&
&S_{456} = \<7, 15, 16, 17, 18, 19, 20\>,
\\
&S_{4} = \<7, 15, 17, 33\>,
&
&S_{345} = \<7, 10, 15, 16, 18, 19\>,
&
&S_{3456} = \<7, 16, 18, 20, 22, 24, 26\>,
\\
&S_{34} = \<7, 8, 10, 19\>,
&
&S_{2345} = \<7, 15, 18, 24, 26, 34\>,
&
&S_{23456} = \<7, 24, 25, 27, 30, 36, 40\>
\end{align*}
achieve each such interval of decomposition lengths.  
\end{example}

In addition to aligning with Remark~\ref{r:m4possibleintervals} and Theorem~\ref{t:m6interval}, computational evidence suggests the following conjecture is true for $m = 8$ as well.  In fact, via a similar search to the ones that yielded Examples~\ref{e:m5possibleintervals}-\ref{e:m7possibleintervals}, if Conjecture~\ref{conj:m8possibleintervals} is true, then it comprises the only restrictions on decomposition lengths in the $m = 8$ case.  

\begin{conj}\label{conj:m8possibleintervals}
Fix a numerical semigroup $S$ of multiplicity $m = 2k$ with $k \in \ZZ$.  If $S$ has a decomposition of length $j$ with $k < j < m$, then $S$ has a decomposition of length $j-1$.  



\end{conj}

\section{Ordinary numerical semigroups}
\label{sec:ordinary}

Recall that the \emph{ordinary} numerical semigroup with multiplicity $m$ is 
$$
H_m = \<m, m + 1, \ldots, 2m - 1\> = \ZZ_{\ge 0} \setminus \{1, 2, \ldots, m - 1\}
$$
and has $\operatorname{F}(H_m) = m-1$, $\operatorname{SG}(H_m) = [\tfrac{1}{2}m, m-1] \cap \ZZ$, and $|\operatorname{SG}(H_m)| = \lfloor \tfrac{1}{2}m \rfloor$.  In~this section, we construct irreducible decompositions of $H_m$ demonstrating that the number of distinct lengths asymptotically approaches $|\operatorname{SG}(H_m)|$.

We first describe the irreducible components we will be using.  Define
$$T(F) = \ZZ_{\ge 0} \setminus ([1, \tfrac{1}{2}F] \cup \{F\}),$$
and given $j, F \in \ZZ_{\ge 0}$ with $F = 2^j (2k+1)$ for $k \in \ZZ_{\ge 0}$, define
$$I(F) = (\<2^{j+1}\> + T(F)) \setminus \{2^j (2k'+1) : 0 \le k' \le k\}.$$

Table~\ref{tb:ordinarycomponents} contains several examples of $T(F)$ and $I(F)$.  Intuitively, $I(F)$ is obtained from $T(F)$ by removing some elements $n \in (\tfrac{1}{2}F, F) \subseteq T(F)$ and subsequently adding each $F - n$, so as to preserve irreducibility.  Lemma~\ref{l:ordinarydecomp} below ensures the resulting set remains closed under addition.  
Note in particular that $I(F) = \<2, F+2\>$ for any odd $F$.

\begin{table}[t]
\begin{center}
\begin{tabular}{l|*{24}{r@{\phantom{\,\,\,}}}r}
\hline
$T(20)$
&0& & & & & & & & & &  &11&12&13&14&15&16&17&18&19&  &21&$\rightarrow$  
\\
$I(20)$
&0& & & & & & & &8& &  &11&  &13&14&15&16&17&18&19&  &21&$\rightarrow$  
\\[0.4em]
$T(21)$
&0& & & & & & & & & &  &11&12&13&14&15&16&17&18&19&20&  &22&$\rightarrow$  
\\
$I(21)$
&0& &2& &4& &6& &8& &10&  &12&  &14&  &16&  &18&  &20&  &22&$\rightarrow$  
\\[0.4em]
$T(22)$
&0& & & & & & & & & &  &  &12&13&14&15&16&17&18&19&20&21&  &23&$\rightarrow$
\\
$I(22)$
&0& & & &4& & & &8& &  &  &12&13&  &15&16&17&  &19&20&21&  &23&$\rightarrow$
\end{tabular}
\end{center}
\caption{Elements of several semigroups of the form $T(F)$ and $I(F)$.}
\label{tb:ordinarycomponents}
\end{table}

\begin{lemma}\label{l:ordinarydecomp}
Given $j$ and $F$ as above, the following hold:
\begin{enumerate}[(a)]
\item 
$I(F)$ is a numerical semigroup with $\operatorname F(I(F)) = F$;

\item 
$I(F)$ is symmetric if $j = 0$ and pseudosymmetric otherwise; and

\item 
each $a \in (\tfrac{1}{2}F, F]$ lies in $I(F)$ if and only if $a \not\equiv 2^j \bmod 2^{j+1}$.  

\end{enumerate}
\end{lemma}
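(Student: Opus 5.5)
The approach is to first write down $I(F)$ explicitly as a subset of $\ZZ_{\ge 0}$; all three parts then follow from elementary congruence checks. Write $d = 2^{j+1}$, so that $F = 2^j(2k+1) \equiv 2^j \bmod d$. The removed set $\{2^j(2k'+1) : 0 \le k' \le k\}$ is exactly $R = \{x \in [0,F] : x \equiv 2^j \bmod d\}$.

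\textbf{Step 1: an explicit description.} I would show that
\[
I(F) = \{x \in [0,\tfrac12 F] : x \equiv 0 \bmod d\} \cup \{x \in (\tfrac12 F, F) : x \not\equiv 2^j \bmod d\} \cup (F,\infty)\cap\ZZ .
\]
To see this, take $nd + t$ with $n \ge 0$ and $t \in T(F)$.
\begin{itemize}
\item If $t = 0$, we get a multiple of $d$. Such an element lies in $R$ only if $0 \equiv 2^j \bmod d$, which is impossible.
\item If $t > \tfrac12 F$, then $nd + t > \tfrac12 F$. Any element of $R$ is removed anyway, so the only question is whether $nd+t$ produces a number in $(\tfrac12 F, F]$ outside $R$ that is not already in $T(F)$. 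The only such candidate is $F$ itself, which lies in $R$.
\end{itemize}
Multiples of $d$ above $\tfrac12 F$ are either already in $T(F)$ or removed because they lie in $R$. This gives the displayed description, and part~(c) can be read off from it directly.

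\textbf{Step 2: part (a).} For closure under addition, take nonzero $x, y \in I(F)$ and split into cases.
\begin{itemize}
\item If both are at most $\tfrac12 F$, then $x + y \equiv 0 \bmod d$. So $x+y$ is either a multiple of $d$ in $[0,\tfrac12 F]$, or it exceeds $\tfrac12 F$ and is not $\equiv 2^j \bmod d$. Either way it lies in $I(F)$.
\item If $x \le \tfrac12 F < y$, then $x + y > \tfrac12 F$ and $x + y \equiv y \not\equiv 2^j \bmod d$ (when $y < F$), or $x+y > F$. Either way it lies in $I(F)$.
\item If both exceed $\tfrac12 F$, then $x + y > F$.
\end{itemize}
Finally, $F \in R$ is a gap and everything above $F$ lies in $I(F)$, so $\operatorname{F}(I(F)) = F$.

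\textbf{Step 3: part (b).} I would verify the symmetry criterion from the introduction directly: for every integer $x \ne \tfrac12 F$, we have $x \in I(F)$ if and only if $F - x \notin I(F)$. It suffices to treat $0 \le x \le F$. Note that $F - x \equiv 2^j - x \bmod d$.
\begin{itemize}
\item If $x \le \tfrac12 F$ is a multiple of $d$, then $F - x \ge \tfrac12 F$ and $F - x \equiv 2^j \bmod d$. Hence $F-x \in R$, so it is a gap.
\item If $x \in (\tfrac12 F, F)$ with $x \not\equiv 2^j \bmod d$, then $F - x < \tfrac12 F$ and $F - x \not\equiv 0 \bmod d$. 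Hence $F-x$ is a gap.
\end{itemize}
The converse directions are the same computations read backwards. The one point needing separate attention is $x = \tfrac12 F$.
\begin{itemize}
\item When $j = 0$, $F$ is odd, so there is no middle element, and $I(F)$ is symmetric.
\item When $j \ge 1$, we have $\tfrac12 F = 2^{j-1}(2k+1)$, which is not a multiple of $d$. Thus $\tfrac12 F \notin I(F)$, and $I(F)$ is pseudo-symmetric.
\end{itemize}
As an alternative check, one can count gaps and compare with $\tfrac12(F+1)$ or $\tfrac12 F + 1$.

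The main obstacle is Step~1: correctly identifying which sums $nd + t$ land in $(\tfrac12 F, F]$, and confirming that taking $\<d\> + T(F)$ before removing $R$ creates no extra elements. Once that description is in hand, parts (a)–(c) are routine residue checks modulo $2^{j+1}$.
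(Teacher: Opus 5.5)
Your proof is correct. For parts (a) and (c) it follows the paper's argument. The paper uses the same three cases ($a,b>\tfrac12F$; $a<\tfrac12F<b$; $a,b<\tfrac12F$) and the same residues modulo $2^{j+1}$. It does lean on the closure of $\langle 2^{j+1}\rangle+T(F)$ and only checks that $a+b\notin A$. It also uses your Step~1 description only implicitly, for instance when it asserts $2^{j+1}\mid a$ for $a<\tfrac12F$. Making that description explicit is a real gain in rigor.

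There is one harmless slip in Step~1: no multiple of $2^{j+1}$ lies in $R$. The multiples of $d$ above $\tfrac12F$ are all in $T(F)$ simply because $F$ is not one of them.

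The genuine difference is in part (b).
\begin{itemize}
\item The paper observes that $I(F)=(T(F)\setminus A)\cup(F-A)$, so $I(F)$ and $T(F)$ have the same genus and Frobenius number. It then transfers irreducibility from $T(F)$ via the genus characterization of irreducible numerical semigroups, and the parity of $F$ decides symmetric versus pseudo-symmetric.
\item You instead verify directly that $x\mapsto F-x$ exchanges elements and gaps away from $\tfrac12F$.
\end{itemize}
The paper's route is shorter given those known facts. Yours is self-contained: it needs neither the irreducibility of $T(F)$ nor the genus criterion, and it shows concretely why removing $A$ and adding $F-A$ preserves the symmetry. Your suggested ``alternative check'' by counting gaps is essentially the paper's argument.
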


\begin{proof}
Let $A = \{2^j \cdot (2k'+1) : 0 \le k' \le k\} = \{2^j, 2^j \cdot 3, \ldots, F - 2^{j+1}, F\}$.  We first briefly prove~(c):\ notice that $T(F) \subseteq \<2^{i+1}\> + T(F)$, so any gaps of $I(F)$ in $(\tfrac{1}{2}F, F]$ must lie in $A$.  

Now, it is known that $T(F)$ is a numerical semigroup, so clearly $\<2^{j+1}\> + T(F)$ is a numerical semigroup as well.  To prove $I(F)$ is closed under addition, it suffices to fix $a, b \in I(F)$ and prove $a + b \notin A$.  To this end, consider the following cases:
\begin{itemize}
\item 
if $a, b > F/2$, then $a + b > F = \max A$;

\item 
if $a < F/2$ and $b > F/2$, then $2^{j+1} \mid a$ and $b \not\equiv 2^j \bmod 2^{j+1}$, so we have $a + b \equiv b \bmod 2^{j+1}$ and thus $a + b \notin A$; and

\item 
if $a, b < F/2$, then $2^{j+1} \mid a, b$, so $2^{j+1} \mid (a + b)$ and thus $a + b \notin A$.  

\end{itemize}
This proves $I(F)$ is a numerical semigroup, and since $\operatorname F(T(F)) = F = \max A$, we have $\operatorname F(I(F)) = F$, thus verifying~(a).  

This leaves~(b).  Notice that $I(F)=(T(F)\setminus A)\cup (F-A)$ and $\operatorname{F}(I(F))=\operatorname{F}(T(F))$. Thus, $I(F)$ and $T(F)$ share genus and Frobenius number.  Since $T(F)$ is irreducible, so is $I(F)$ by \cite[Corollary~4.5]{numerical}.
\end{proof}

\begin{example}\label{e:ordinaryminlen}
The proof of Theorem~\ref{t:ordinarydecomp} yields, among others, the following decompositions of $H_{28}$:
\begin{align*}
H_{28}
&= I(16) \cap I(24) \cap I(20) \cap I(26) \cap I(27)
\\
&= I(16) \cap I(24) \cap I(20) \cap I(26) \cap I(25) \cap T(27)
\\
&= I(16) \cap I(24) \cap I(20) \cap I(22) \cap I(25) \cap T(27) \cap T(26)
\\
&= I(16) \cap I(24) \cap I(20) \cap I(22) \cap I(23) \cap T(27) \cap T(26) \cap T(25)
\\
&= I(16) \phantom{{}\cap{}} \phantom{I(24)} \cap I(20) \cap I(22) \cap I(23) \cap T(27) \cap T(26) \cap T(25) \cap T(24).
\end{align*}
The first decomposition above amounts to partitioning
$$
\operatorname{SG}(H_{28}) = \{16\} \cup \{24\} \cup \{20\} \cup \{14,18,22,26\} \cup \{15,17,\ldots,27\}
$$
based on the 2-adic valuation of each element, and each subsequent decomposition splits the last element off of one block.  The only exception is the last line, wherein $I(24)$ is redundant after including $T(24)$ since both avoid only $24 \in \operatorname{SG}(H_{28})$.  
\end{example}

\begin{thm}\label{t:ordinarydecomp}
The ordinary numerical semigroup $H_m$ has a decomposition into irreducibles of each length in $[n_m, \tfrac{1}{2}m]$, where $n_m = \lfloor \log_2(\tfrac{1}{3}(m-1)) \rfloor + 2$.  
\end{thm}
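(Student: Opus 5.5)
The plan is to use the components $I(F)$ and $T(F)$ with $F \in \operatorname{SG}(H_m) = [\tfrac12 m, m-1]\cap\ZZ$, and to check every decomposition through~\eqref{eq:char-decomp-SG} and its irredundancy refinement. First I record which special gaps each component avoids. Take $F \le m-1$. Every integer $\ge m > F$ lies in $T(F)$ and in $I(F)$, so both are irreducible oversemigroups of $H_m$; irreducibility comes from Lemma~\ref{l:ordinarydecomp}(a),(b) for $I(F)$, and is standard for $T(F)$. Since $\tfrac12 F < \tfrac12 m$, every $x \in \operatorname{SG}(H_m)$ with $x \le F$ lies in $(\tfrac12 F, F]$.
\begin{itemize}
\item $T(F)$ avoids exactly $\{F\}$ among the special gaps.
\item By Lemma~\ref{l:ordinarydecomp}(c), $I(F)$ avoids exactly the special gaps $x \le F$ with the same $2$-adic valuation as $F$.
\end{itemize}

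Next I partition $\operatorname{SG}(H_m)$ into blocks $B_v$ by $2$-adic valuation $v$. Let $F_v = \max B_v$. Then $I(F_v)$ avoids all of $B_v$ and nothing outside it. By~\eqref{eq:char-decomp-SG}, $H_m = \bigcap_v I(F_v)$, and this is irredundant because each component is the only one avoiding the elements of its own block. Its length is the number $N$ of nonempty blocks.

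To get every length from $N$ up to $\#\operatorname{SG}(H_m) = \lfloor \tfrac12 m\rfloor$, I split one element at a time, as in Example~\ref{e:ordinaryminlen}. Suppose a block currently handled by $I(F)$ contains at least two elements, with $F'$ the next smaller element of the same valuation. Replace $I(F)$ by $I(F') \cap T(F)$. The new pieces avoid, respectively, the block minus $F$ and the singleton $\{F\}$. Both are nonempty, each is avoided by no other component, and their union is what $I(F)$ avoided. The cover is therefore preserved and stays irredundant, and the length rises by exactly one. I repeat this until every block has been reduced to singletons, each handled by a single $I$ or $T$ component. The total number of steps is $\sum_v(\#B_v - 1) = \lfloor\tfrac12 m\rfloor - N$, so every length in $[N, \lfloor\tfrac12 m\rfloor]$ is attained.

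It remains to show $N \le n_m$, and I expect this counting step to be the main obstacle, mostly because of bookkeeping with floors. I split valuations into two kinds.
\begin{itemize}
\item Valuations with $3\cdot 2^v \le m-1$: there are at most $\lfloor\log_2(\tfrac13(m-1))\rfloor + 1$ of these, counting $v = 0, 1, \ldots$.
\item Valuations with $3\cdot 2^v > m-1$: the only odd multiple of $2^v$ that can be $\le m-1$ is $2^v$ itself. So $B_v \ne \emptyset$ forces $2^v \in [\tfrac12 m, m-1]$, and this interval contains at most one power of $2$.
\end{itemize}
Hence $N \le \lfloor\log_2(\tfrac13(m-1))\rfloor + 2 = n_m$. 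Combined with the splitting argument, this gives every length in $[n_m, \tfrac12 m]$. Small $m$, where the logarithm is negative or the interval is empty, should be checked separately but are trivial.
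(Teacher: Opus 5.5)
Your proposal is correct and follows essentially the paper's route: the same decomposition $\bigcap_v I(F_v)$ indexed by $2$-adic valuation classes of $\operatorname{SG}(H_m)$, followed by peeling off one special gap at a time into a $T$-component. The differences are minor. You split only within blocks, so each step raises the length by exactly one, whereas the paper peels off special gaps in globally decreasing order and some steps leave the length unchanged. You also prove only $N \le n_m$, which suffices, whereas the paper computes the number of valuation classes exactly as $n_m$ by a case analysis.
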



\begin{proof}
We begin by exhibiting a decomposition of the smallest claimed length.  Let 
\[J = \{i \in \ZZ_{\ge 0} : 2^i (2k + 1) \in \operatorname{SG}(H_m) \text{ for some } k \in \ZZ_{\ge 0}\}.\]
We claim $|J| = n_m$.  Consider the following cases.  
\begin{itemize}
\item 
If $2^j \le \operatorname{F}(H_m) < 2^j + 2^{j-1}$ for some $j \in \ZZ$, then we claim $J = \{0, 1, \ldots, j - 2, j\}$.  Indeed, $2^j \in \operatorname{SG}(H_m)$ implies $j \in J$, 
$$2^{j-1} < \min \operatorname{SG}(H_m) \le \max \operatorname{SG}(H_m) < 3 \cdot 2^{j-1}$$
implies $j - 1 \notin J$, and for each $i \le j - 2$, the positive integers of the form $2^i(2k + 1)$ with $k \in \ZZ$ form an arithmetic sequence of step size $2^{i+1}$, so since  $\operatorname{SG}(H_m)$ is an interval of length $\tfrac{1}{2}m \ge 2^{j-1}$, we conclude $i \in J$.  
Having now shown $J$ has the claimed form, since $2^j \le m - 1 \le 2^j + 2^{j-1}$, we see 
\[
2^{j-2} \le \tfrac{1}{3}(m-1) < 2^{j-1},
\]
so $n_m = (j-2) + 2 = j = |J|$.  

\item 
In all remaining cases, we must have $2^j + 2^{j-1} \le \operatorname{F}(H_m) < 2^{j+1}$ for some $j \in \ZZ_{\ge 0}$, and by an analogous argument, $J = \{0, 1, \ldots, j\}$ and $|J| = n_m$.  
\end{itemize}

Next, for each $j \in J$, let 
\[F_j = \max(\{g \in \operatorname{SG}(H_m) : g = 2^j (2k + 1) \text{ for some } k \in \ZZ_{\ge 0}\}).\]
We claim 
\[H_m = \bigcap_{j \in J} I(F_j),\]
each component of which is irredudible by Lemma~\ref{l:ordinarydecomp}(b).  
Indeed, for each $j$, 
$$\operatorname F(I(F_j)) = F_j \le m - 1 = \operatorname F(H_m),$$
so each component contains $H_m$.  Each special gap $g \in \operatorname{SG}(H_m)$ can be written as $g = 2^j (2k + 1)$ for some $k \in \ZZ_{\ge 0}$ and $j \in J$ by the definition of $J$; since $g \ge \tfrac{1}{2}m \ge \tfrac{1}{2}F_j$ and $g \equiv 2^j \bmod 2^{j+1}$, we have $g \notin I(F_j)$ by Lemma~\ref{l:ordinarydecomp}(c).  This proves the claimed equality by \eqref{eq:char-decomp-SG}.
Note that each special gap of $H_m$ is a gap of exactly one component of the above constructed decomposition: if $g=2^j(2t+1)$ for some positive integer $t$, then $g\not\in I(F_j)$ and it will belong to the rest of components. 

Write $\operatorname{SG}(H_m) = \{g_1 > g_2 > \cdots\}$.  For each $\ell \le \tfrac{1}{2}(m-1) = |\operatorname{SG}(H_m)|$, denote by $D_\ell$ the decomposition
$$H_m = \bigg( \bigcap_{j \in J'} I(F_j') \bigg) \cap \bigg( \bigcap_{i = 1}^\ell T(g_i) \bigg),$$
where
$$J' = \{j : 2^j (2k + 1) \in \{g_{\ell+1}, g_{\ell+2}, \ldots\} \text{ for some } k \in \ZZ_{\ge 0}\}$$
and for each $j \in J'$,  
$$F_j' = \max(\{g \in \{g_{\ell+1}, g_{\ell+2}, \ldots\} : g = 2^j (2k + 1) \text{ for some } k \in \ZZ_{\ge 0}\}.$$
To prove this is a decomposition, we compare $D_\ell$ to $D_{\ell+1}$, where $D_0$ denotes the decomposition constructed in the first part of the proof.  Notice that $D_\ell$ and $D_{\ell+1}$ only differ in two ways:\ (i) the latter has an extra component $T(g_{\ell+1})$, and (ii) the component $I(F_j')$ in $D_\ell$ with $F_j' = g_{\ell+1}$ is either replaced by $I(F_j' - 2^{i+1})$ in $D_{\ell+1}$  or omitted, based on whether $F_j' - 2^{i+1} \in \operatorname{SG}(H_m)$.  In particular, every element of $\operatorname{SG}(H_m)$ is covered by an identical component of $D_\ell$ and $D_{\ell+1}$ except $g_{\ell+1}$, and it is still covered in $D_{\ell+1}$.  By induction, we conclude each $D_\ell$ is a decomposition of $H_m$.  

Lastly, we examine lengths of the $D_\ell$.  For each $\ell$, the decomposition $D_{\ell+1}$ either has the same number of components of $D_\ell$ or exactly one more component.  Moreover, the final decomposition is 
$$H_m = \!\!\!\!\!\!\bigcap_{g \in \operatorname{SG}(H_m)} \!\!\!\!\!\! T(g),$$
which has $|\operatorname{SG}(H_m)| =  \lfloor \tfrac{1}{2}m \rfloor$ components.  This completes the proof.  
\end{proof}

\begin{remark}\label{r:ordinarynotactualmin}
The decomposition
$$
H_{28} = I(27) \cap I(26) \cap \<7, 11, 12, 17\> \cap \<9, 10, 13, 16, 17, 21\>,
$$
demonstrates the decompositions constructed in Theorem~\ref{t:ordinarydecomp} need not achieve the minimum possible decomposition length.  In fact, one can verify computationally that $m = 28$ is the smallest multiplicity for which this phenomenon occurs, and 
$$
H_{56} = I(55) \cap I(54) \cap \<8, 15, 19, 41\> \cap \<7, 15, 23, 31, 39, 47\>
$$
is the first example where the shortest decomposition in Theorem~\ref{t:ordinarydecomp} exceeds the minimum possible length by at least 2.  Note that by \cite[Theorem~2.5]{b-f}, the minimum decomposition length of $H_m$ is unbounded as $m \to \infty$.  
\end{remark}




\end{document}